\newtheorem{theorem}{Theorem}
\newtheorem{lemma}{Lemma}
\newtheorem{definition}{Definition}
\newtheorem{example}{Example}
\newcommand{\R}{\mathbb R} 
\renewcommand{\Re}{\mathrm{Re}}
\newcommand{\C}{\mathbb C}
\newcommand{\E}{\mathbf {E}}
\newcommand{\N}{\mathbb {N}}
\newcommand{\ind}{\mathbb{I}}
\newcommand{\Cov}{\mathbf{Cov}}
\newcommand{\vf}{\varphi}
\title{A short memory condition for  infinitely divisible random fields}
\author{Vitalii Makogin\thanks{Dr. Vitaly Makogin unexpectedly passed away on the 8th of May 2024.}, Evgeny Spodarev\thanks{Institute of Stochastics,
Ulm University, Germany; evgeny.spodarev@uni-ulm.de}}
\date{\today}
\begin{document}

\maketitle

\begin{abstract}
    This small note yields a sufficient condition for the short range dependence of measurable stationary infinitely divisible moving average  random fields with $d$--dimensional index space. Here, the short/long range dependence concept in borrowed from the paper \cite{Kulik_Spo_21}. In the special case of symmetric stable moving averages, our new condition coincides with the one from paper \cite{MaOeRappSpo21}. 
\end{abstract}

{\bf Keywords}: {Stationary random field, moving average, infinitely divisible, integral spectral representation, $\alpha$--stable, time series, heavy tails}.
 \\
 
{\bf AMS subject classification 2020}: {Primary 60G60; Secondary  60E07, 60E10} \\

\section{Memory of random fields and their level sets}\label{sect:Intro}

A random field $X = \{X(t), \; t\in \R^d \}$  is called { \it infinitely divisible} (ID, for short) if all its finite dimensional distributions are
infinitely divisible, see e.g.  \cite{SteutelHarn04,Sato13} for an introduction into infinite divisibility. $X$ is said to be { \it  stationary} if all its finite dimensional distributions are translation invariant (with respect to space variable $t$). In this note, we give a sufficient condition for stationary ID $X$ to have a short memory.   Many definitions of the memory of random functions are available in research literature, confer e.g. \cite{MR3075595,Samorodnitsky_LRD16}.   However,  the only short/long range dependence concept which is invariant with respect to monotone transformations on the marginals of the  field $X$  has been recently introduced in \cite{Kulik_Spo_21}.  It depends on the level sets $\{ t: X(t)>u \}$  of $X$  in the following way:

\begin{definition}\label{def:SRD}
A measurable stationary random field $X$ is called short range dependent (SRD) if
\begin{equation*}
  \int_{\R^d}  \int_\R\int_\R  \left|  \Cov(\mathbbm{1}\{X(t)>u\},\mathbbm{1}\{X(0)>v\})  \right|\,   \mu(du)\mu(dv) \, dt < \infty
\end{equation*}
for all probability measures $\mu$ on $\R$. If the above integral is infinite for some choice of $\mu$ the field $X$ is named long range dependent (LRD).  
\end{definition}
In addition, the above definition can be applied to heavy--tailed fields $X$ since it does not require the integrability of $X(0)$. 
The  target statistic from Definition \ref{def:SRD} is the volume of level sets of $X$. In multivariate extreme value theory, it appears naturally e.g. in the study of  the asymptotic behaviour of estimates of the tail dependence coefficient, cf. the recent paper \cite{OestRapp23}. A statistical procedure for the detection of this kind of SRD/LRD is discussed in \cite{OeRappSpo23}.

In paper \cite{MaOeRappSpo21}, implications of Definition \ref{def:SRD} for $\alpha$-- and max--stable processes are shown. 
There, necessary and/or sufficient conditions of SRD/LRD are given in terms of the kernel function $f$ in the spectral decomposition of $X$.

Here, we prove a  similar sufficient condition for stationary ID  random fields $X$ which possess a moving average  integral representation.
The paper is organized as follows: preliminaries on ID fields are given in the next section. The main SRD condition is formulated in Theorem \ref{thm:main} of Section \ref{sect:Main}.  An example of symmetric stable moving averages is considered there as well. Finally, the proof of Theorem \ref{thm:main} (subdivided into a number of auxiliary  results) is given in Section \ref{sect:Proofs}.


\section{Infinitely divisible random fields}\label{sect:ID}

Let $\Lambda$ be a stationary ID independently scattered random measure with Lebesgue control measure and L\'{e}vy characteristics $(a_0, b_0, \nu_0),$ where $a_0 \in \R$, $ b_0\geq 0 $ and $\nu_0$ is a L\'{e}vy measure.   
In the sequel, we follow the exposition of ID random functions in \cite{RajRos89}:
\begin{definition}
A function $f:\R^d\to \R$ is called $\Lambda$-integrable if 
\[\int_{\R^d}|f(x)|\left|a_0 +\int_{\R}\left(    \mathbbm{1}_{[-1,1]}(yf(x))-  \mathbbm{1}_{[-1,1]}(y) \right)  y  \nu_0(dy)\right|dx<\infty,\]
\[b_0^2\int_{\R^d} f^2(x)dx<\infty,\]
\[\int_{\R^d}\int_{\R}\min\left(1,y^2 f^2(x)\right)\nu_0(dy)dx<\infty.\]
\end{definition}
For $b_0>0$, $\Lambda $ possesses a non--trivial Gaussian part (in addition to the jump part controlled by the L\'evy measure $\nu_0$).

Let $X = \{X(t), t\in \R^d \}$ be a real--valued measurable stationary ID moving average random field   with spectral representation
\begin{equation}\label{eq:fieldX}
X(t) =\int_{\R^d} f(t-x)\Lambda(dx),\quad t \in  \R^d,
\end{equation}
where $f$ is $\Lambda$-integrable.

The characteristic function $\vf_{X(0)}$ of $X(0)$ is given by
\begin{equation*}
\label{vf:def}
    \vf_{X(0)}(u)=\exp\left(-\int_{\R^d}K(u f(s))ds\right),
\end{equation*}
where
\begin{equation*}
\label{K:def*}
K(s)=-isa_0+ \frac{1}{2} s^2b_0 - \int_{\R}\left(e^{isy}-1-isy \ind_{[-1,1]}(y)\right)\nu_0(dy), \quad s\in\R.
\end{equation*}
Functions  $K$ and $\Re \{K \} $ are known to be continuous negative definite functions. Obviously,  $\Re \{ K(s) \} =\Re \{ K (|s|) \}$, $s\in \R$ and $\Re \{ K (s) \}=\psi(s^2),s\geq 0,$ where $\psi$ is a {\it Bernstein function}, see \cite{Schilling12} for its definition and properties.


Due to the linearity of ID stochastic integrals,  random variable
$$s_1X(t)+s_2X(0)=\int_{\R^d}\left(s_1f(t-x)+s_2f(-x)\right)\Lambda (dx)$$ 

is  ID   with characteristic function
\begin{equation*}
\label{vft:def}
\vf_t(s_1,s_2)=\exp\left(-\int_{\R^d}K(s_1f(t-x)+s_2f(-x)) \, dx\right),\quad (s_1,s_2)\in \R^2.
\end{equation*}


\section{Main result}\label{sect:Main}

Let $| \cdot |$ be the Lebesgue measure in $\R^d$. Denote by $L^{p}(\R^d)$ the space of all  functions 
$f:\R^d\to \R$ such that $|f|^p$ is Lebesgue--integrable, $p>0$. Let $a\wedge b$ denote the minimum of $a$ and $b$.

Introduce \[\sigma_f^2(s):=\int_{\R^d}\Re\{K (s f(-x))\}\, dx, \quad s\in \R,\]
\[\rho_t(s_1,s_2):=\frac{1}{\sigma_f(s_1)\sigma_f(s_2)}\int_{\R^d}\sqrt{\Re\{ K(s_1 f(t-x))\}\Re\{ K (s_2 f(-x))\}}  \, dx \]
for $s_1,s_2\in \R$.
Note that $0\leq \rho_t(s_1,s_2)\leq 1$ for all $s_1,s_2\in \R$, $t\in \R^d.$
In addition,  set \[\tilde{\rho}_t:=\sup_{(s_1,s_2)\in \R^2}\rho_t(s_1,s_2)\] and $A_{\tilde{\rho}}=\{t\in \R^d:  \tilde{\rho}_t \leq \tilde{\rho}  \}$ for a fixed $\tilde{\rho}\in (0,1)$.
Let $A_{\tilde{\rho}}^c=\{t\in \R^d:  \tilde{\rho}_t > \tilde{\rho}  \}$ be the compliment of  $A_{\tilde{\rho}}$ in $\R^d$.

\begin{theorem}\label{thm:main}
Let $X$ be  a measurable stationary ID moving average random field  \eqref{eq:fieldX}.
Assume that there exists $\tilde{\rho}<1$ such that $| A^c_{\tilde{\rho}}|<\infty$ and 
\begin{equation}
\label{thm1:eq1}    \int_{\R_+}\frac{\sigma_f(s)}{s} e^{-(1-\tilde{\rho})\sigma_f^2(s)}ds <\infty.
\end{equation}
Then, $X$ is SRD if 
$\int_{\R^d}\tilde\rho_t dt<\infty.$
\end{theorem}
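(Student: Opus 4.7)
The plan is to bound $|\Cov(\mathbbm{1}\{X(t)>u\},\mathbbm{1}\{X(0)>v\})|$ uniformly in $(u,v)$ in terms of the joint characteristic function, and then exploit the explicit moving-average form of this function to reduce everything to the quantities $\sigma_f$ and $\rho_t$. The first ingredient is the Parseval-type inequality
\[
  \bigl|\Cov(\mathbbm{1}\{X(t)>u\},\mathbbm{1}\{X(0)>v\})\bigr|
  \le \frac{1}{4\pi^2}\iint_{\R^2}\frac{|\vf_t(s_1,s_2)-\vf_{X(t)}(s_1)\vf_{X(0)}(s_2)|}{|s_1||s_2|}\,ds_1\,ds_2,
\]
which follows from the distributional identity $\mathrm{sgn}(x)=\frac{1}{i\pi}\,\mathrm{p.v.}\int_\R s^{-1}e^{isx}\,ds$ applied separately to $X(t)-u$ and $X(0)-v$, by multiplying, taking expectations and subtracting off the corresponding identity for independent copies. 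Since the right-hand side is independent of $u,v$, integration against any product probability measure $\mu\otimes\mu$ yields the same bound, and it suffices to integrate the right-hand side in $t$ over $\R^d$.

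The second ingredient is the factorisation
\[
  \vf_t(s_1,s_2)-\vf_{X(t)}(s_1)\vf_{X(0)}(s_2)
  = \vf_{X(t)}(s_1)\vf_{X(0)}(s_2)\bigl(e^{W_t(s_1,s_2)}-1\bigr),
\]
where
\[
  W_t(s_1,s_2):=\int_{\R^d}\bigl[K(s_1f(t-x))+K(s_2f(-x))-K(s_1f(t-x)+s_2f(-x))\bigr]\,dx,
\]
combined with $|\vf_{X(t)}(s_1)\vf_{X(0)}(s_2)|=\exp(-\sigma_f^2(s_1)-\sigma_f^2(s_2))$ (by stationarity) and the elementary $|e^W-1|\le|W|e^{|W|}$; this reduces matters to bounding $|W_t|$. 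The crucial pointwise estimate is the negative-definite inequality
\[
  |K(a+b)-K(a)-K(b)|\le 2\sqrt{\Re K(a)\,\Re K(b)},\qquad a,b\in\R,
\]
which one obtains by expanding the left side via the L\'evy-Khintchine formula into $ab\,b_0-\int_\R(e^{iay}-1)(e^{iby}-1)\,\nu_0(dy)$ and then applying Cauchy-Schwarz twice: once to the $\nu_0$-integral, and once in $\R^2$ to merge the Gaussian and jump contributions. Integrating this bound in $x$ yields $|W_t(s_1,s_2)|\le 2\sigma_f(s_1)\sigma_f(s_2)\rho_t(s_1,s_2)$.

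Combining the above, the triple integral in Definition \ref{def:SRD} is dominated by a constant times
\[
  \int_{\R^d}\iint_{\R^2}\frac{\sigma_f(s_1)\sigma_f(s_2)\rho_t(s_1,s_2)}{|s_1||s_2|}\exp\bigl(-\sigma_f^2(s_1)-\sigma_f^2(s_2)+2\sigma_f(s_1)\sigma_f(s_2)\rho_t(s_1,s_2)\bigr)ds_1ds_2\,dt.
\]
Split the $t$-integration into $A_{\tilde\rho}^c$ and $A_{\tilde\rho}$: on the former, use the trivial bound $|\Cov(\mathbbm{1},\mathbbm{1})|\le 1/4$ to get a contribution $\le\tfrac14|A_{\tilde\rho}^c|<\infty$. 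On $A_{\tilde\rho}$, use $\rho_t(s_1,s_2)\le\tilde\rho_t\le\tilde\rho$ in the exponent together with the elementary $-a^2-b^2+2ab\tilde\rho\le-(1-\tilde\rho)(a^2+b^2)$ to extract $e^{-(1-\tilde\rho)(\sigma_f^2(s_1)+\sigma_f^2(s_2))}$, and bound the remaining linear $\rho_t$ by $\tilde\rho_t$. Applying Fubini gives the bound
\[
  \Bigl(\int_{A_{\tilde\rho}}\tilde\rho_t\,dt\Bigr)\Bigl(2\int_{\R_+}\frac{\sigma_f(s)}{s}e^{-(1-\tilde\rho)\sigma_f^2(s)}\,ds\Bigr)^2,
\]
which is finite by the two integrability hypotheses (using that $\sigma_f$ is even in $s$).

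The principal technical obstacle is making the Parseval bound of the first step rigorous: $\mathbbm{1}\{\cdot>u\}$ lies neither in $L^1$ nor in $L^2$, so the sign representation must be interpreted in the principal-value sense and Fubini carefully justified when passing the expectation through the double Fourier integral. One expects this to be handled by truncating the Fourier integrals to $[-T,T]^2$, using the exponential $e^{-\sigma_f^2}$ decay of $|\vf_{X(t)}\vf_{X(0)}|$ to justify letting $T\to\infty$, and, if needed, smoothing the indicators by convolution with a vanishing kernel, as done for stable processes in \cite{MaOeRappSpo21}.
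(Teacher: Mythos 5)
Your proposal is correct and follows essentially the same route as the paper: the Parseval-type covariance identity (the paper cites \cite[Theorem 2.1]{MaOeRappSpo21}), the key negative-definite inequality $|K(a+b)-K(a)-K(b)|\le 2\sqrt{\Re\{K(a)\}\Re\{K(b)\}}$ leading to the bound $2\sigma_f(s_1)\sigma_f(s_2)\rho_t\,e^{-\sigma_f^2(s_1)-\sigma_f^2(s_2)+2\sigma_f(s_1)\sigma_f(s_2)\rho_t}$, and the split of the $t$-integral over $A_{\tilde\rho}$ and $A^c_{\tilde\rho}$. The only (immaterial) differences are that you derive the key inequality directly from the L\'evy--Khintchine formula via Cauchy--Schwarz where the paper invokes Schilling's general result for continuous negative definite functions, and you use the factorisation $\vf_{X(t)}\vf_{X(0)}(e^{W}-1)$ with $|e^{W}-1|\le|W|e^{|W|}$ in place of the paper's bound $|e^{-z_1}-e^{-z_2}|\le e^{-(\Re\{z_1\}\wedge\Re\{z_2\})}|z_1-z_2|$; both yield the identical estimate.
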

The proof of this result including several auxiliary lemmata will be given in the next section. Notice that $\sigma_f^2(s)=-\log|\vf_{X(0)}(s)|.$
Then the integral within \eqref{thm1:eq1} rewrites as
$$\int_{\R_+}\frac{|\log|\vf_{X(0)}(s)||^{1/2}}{s} |\vf_{X(0)}(s)|^{1-\tilde{\rho}}ds.$$

Now let us explain how to check the above condition \eqref{thm1:eq1} in the special case of an   $\alpha$--stable symmetric integrator measure $\Lambda$,  cf. the book \cite{Samorodnitsky}:

\begin{example}[Stable laws]
Let $X=\{X(t),t\in \R^d\}$ be a measurable stationary symmetric  $\alpha-$stable moving average random field \eqref{eq:fieldX} with $\alpha\in (0,2)$ and a  continuous bounded kernel $f\in L^{\alpha}(\R^d)$.
Then  \[\sigma_f^2(s)=|s|^{\alpha}  \| f \|_\alpha^\alpha, \quad s\in \R, \]
where $\| f \|_\alpha:=  \left( \int_{\R^d} |f(x) |^\alpha \, dx \right)^{1/\alpha}$,
\begin{align*}
\rho_t(s_1,s_2)&=\frac{1}{ |s_1|^{\alpha/2}  |s_2|^{\alpha/2}    \| f \|_\alpha^\alpha }\int_{\R^d}\sqrt{   |s_1|^{\alpha}  |s_2|^{\alpha}    | f(t-x) f(-x) |^\alpha      }dx\\
&=\frac{1}{ \| f \|_\alpha^\alpha }\int_{\R^d} | f(t-x) f(-x) |^{\alpha/2} \,     dx=\tilde \rho_t, \quad t\in \R^d.
\end{align*}

Moreover, if $f$ is a continuous bounded  function then $\tilde \rho_t $ is a continuous function at the origin as well (by Lebesgue dominated convergence   theorem). Hence,  there exists $\tilde{\rho}<1$ such that $| A_{\tilde{\rho}}^c|<\infty$ (the set $A_{\tilde{\rho}}^c$ is a neighbourhood of the origin).
Furthermore,
\begin{align*}
 \int_{\R_+}\frac{\sigma_f(s)}{s} e^{-(1-\tilde{\rho})\sigma_f^2(s)}ds&=  \| f \|_\alpha^{\alpha/2}  \int_{\R_+}s^{\alpha/2-1}  e^{-(1-\tilde{\rho})s^{\alpha} \Sigma_f^2}ds<\infty
 , \quad \alpha\in (0,2).
\end{align*}

Hence, the assumptions of Theorem \ref{thm:main} hold true. The  SRD condition $$\int_{\R^d}\tilde\rho_t dt<\infty$$ 
reads  (by Fubini theorem) as $f\in L^{\alpha/2}(\R^d)$ which coincides with  \cite[Theorem 3.4]{MaOeRappSpo21}.
\end{example}


\section{Proof of Theorem \ref{thm:main}}\label{sect:Proofs}

Notice that by \cite[Theorem 4.12]{Schilling12}
the function $$1-e^{itx}-itx \ind_{[-1,1]}(x),\;  t\in\R$$ is negative definite for any $x\in \R$.  We formulate the following

\begin{lemma}[\cite{Schilling12}]
\label{lmm0}
Let $\psi:\R^d\to \C$ be a continuous negative definite function. Then for any $x,y\in \R^d$ it holds
\begin{itemize}
    \item $\Re\{\psi(x)\}\geq 0,$ $\psi(x)=\overline{\psi(-x)}$, 
    \item $\left|\psi(x)+\psi(y)-\psi(x\pm y)\right|\leq 2\sqrt{\Re\{\psi(x)\}}\sqrt{\Re\{\psi(y)\}}, $
    \item $\left|\Re \{\psi(x)\}+\Re \{\psi(x)\}-\Re \{\psi(x\pm y)\}\right|\leq 2\sqrt{\Re\{\psi(x)\}}\sqrt{\Re\{\psi(y)\}}.$
\end{itemize}
\end{lemma}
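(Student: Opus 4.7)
The plan is to reduce all three claims to the fundamental characterization: a continuous function $\psi:\R^d\to\C$ is negative definite iff $\psi(0)\geq 0$ and, for every $t>0$, the function $\phi_t:=e^{-t\psi}$ is positive definite on $\R^d$. Since the functions $K$ and $\Re\{K\}$ that arise in this paper satisfy $\psi(0)=0$, I work under that normalization; the general case reduces to it by the observation that $\psi-\psi(0)$ is again a CNDF (adding constants does not affect the conditional definiteness test $\sum c_j\overline{c_k}\,\psi(x_j-x_k)\leq 0$ for $\sum c_j=0$). The first bullet is then immediate from standard properties of positive definite $\phi_t$: the bound $|\phi_t(x)|\leq\phi_t(0)=1$ reads $e^{-t\Re\{\psi(x)\}}\leq 1$, forcing $\Re\{\psi(x)\}\geq 0$; and the Hermitian symmetry $\phi_t(-x)=\overline{\phi_t(x)}$ for positive definite $\phi_t$, read after taking $-\log$, yields $\psi(-x)=\overline{\psi(x)}$.

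For the main inequality (second bullet) I would invoke Bochner's theorem to write $\phi_t(x)=\int_{\R^d}e^{i\langle x,\xi\rangle}\mu_t(d\xi)$ for a probability measure $\mu_t$, and verify the algebraic identity
\[
\phi_t(0)-\phi_t(x)-\phi_t(y)+\phi_t(x+y)=\int_{\R^d}(1-e^{i\langle x,\xi\rangle})(1-e^{i\langle y,\xi\rangle})\,\mu_t(d\xi).
\]
Cauchy--Schwarz in $L^2(\mu_t)$ on the right-hand side, together with $|1-e^{i\langle z,\xi\rangle}|^2=2(1-\cos\langle z,\xi\rangle)$, produces
\[
|\phi_t(0)-\phi_t(x)-\phi_t(y)+\phi_t(x+y)|^2\leq 4\bigl(\phi_t(0)-\Re\{\phi_t(x)\}\bigr)\bigl(\phi_t(0)-\Re\{\phi_t(y)\}\bigr).
\]
Dividing by $t^2$ and letting $t\to 0^+$ via the Taylor expansion $\phi_t(z)=1-t\psi(z)+O(t^2)$ reconstructs the claimed inequality with the $+$ sign. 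The $-$ variant is recovered by applying the $+$ variant to $-y$ in place of $y$, then using $\psi(-y)=\overline{\psi(y)}$ and $\Re\{\psi(-y)\}=\Re\{\psi(y)\}$.

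The third bullet then follows by taking real parts in the second and using $|\Re\{z\}|\leq|z|$; equivalently, $\Re\{\psi\}$ is itself a real-valued CNDF (the CNDF property is preserved under $\psi\mapsto\tfrac12(\psi+\overline{\psi})$), so the second bullet applied directly to $\Re\{\psi\}$ suffices. The main obstacle is the combination of the product-form identity with the correct Cauchy--Schwarz split in the second bullet: the decomposition must be chosen so that dividing by $t^2$ and passing to the limit recovers exactly $|\psi(x)+\psi(y)-\psi(x+y)|^2$ on the left and $4\Re\{\psi(x)\}\Re\{\psi(y)\}$ on the right. Everything else -- bullet 1, the reduction to $\psi(0)=0$, the Taylor expansion, and the $\pm$ symmetrization -- is routine bookkeeping.
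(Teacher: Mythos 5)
The paper offers no proof of this lemma at all---it is imported verbatim from \cite{Schilling12}---so the only question is whether your self-contained argument is sound, and it essentially is. Your route (Schoenberg: $e^{-t\psi}$ positive definite for all $t>0$; Bochner; the product identity; Cauchy--Schwarz in $L^2(\mu_t)$; divide by $t^2$ and let $t\to 0^+$) is correct: the identity and the bound $\int|1-e^{i\langle z,\xi\rangle}|^2\mu_t(d\xi)=2(\phi_t(0)-\Re\{\phi_t(z)\})$ check out, and the Taylor expansion is used only for fixed $x,y$, which is all that is needed. The textbook proof is shorter and avoids both Bochner and the limit: negative definiteness means precisely that the $2\times 2$ Hermitian matrix with entries $M_{jk}=\psi(x_j)+\overline{\psi(x_k)}-\psi(x_j-x_k)$ is positive semidefinite, and with $x_1=x$, $x_2=-y$, $\psi(0)=0$ the inequality $|M_{12}|^2\le M_{11}M_{22}$ is exactly the second bullet with the $+$ sign. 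Your limiting argument buys nothing extra here, but it is a legitimate alternative. The reduction to $\psi(0)=0$ is harmless for the paper (its $K$ vanishes at the origin), though for general $\psi(0)\ge 0$ you would still owe the elementary check that $2\sqrt{ab}+c\le 2\sqrt{(a+c)(b+c)}$.

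One step deserves a correction. Your symmetrization $y\mapsto -y$ does \emph{not} recover the displayed minus-sign variant; it yields $|\psi(x)+\overline{\psi(y)}-\psi(x-y)|\le 2\sqrt{\Re\{\psi(x)\}}\sqrt{\Re\{\psi(y)\}}$, with $\overline{\psi(y)}$ in place of $\psi(y)$. That conjugated form is in fact the correct statement: as literally written, the minus-sign case of the second bullet fails for genuinely complex $\psi$ (take $\psi(s)=-ias$, a continuous negative definite function with $\Re\{\psi\}\equiv 0$, for which $|\psi(x)+\psi(y)-\psi(x-y)|=2|ay|$). So you should state the conjugated version rather than claim the displayed one; this costs the paper nothing, since Lemma \ref{lmm2} and Lemma \ref{lmm3} only ever use the $+$ sign and the real-part version of the third bullet, where the conjugation is invisible.
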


\noindent  The next inequality is a corollary of the above statements:
\begin{lemma}
\label{lmm2}
For any continuous negative definite function $\psi:\R^d\to \C$  it holds
\begin{equation*}
     \Re\{\psi(x+y)\} \wedge (\Re\{\psi(x)+\psi(y)\}) \geq \left(\sqrt{\Re\{\psi(x)\}}-\sqrt{\Re\{\psi(y)\}}\right)^2.
\end{equation*}    
\end{lemma}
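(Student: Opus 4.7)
The plan is to reduce the stated inequality to the two bullet-point estimates in Lemma \ref{lmm0} after a convenient change of variables. Set
\[
a := \sqrt{\Re\{\psi(x)\}}, \qquad b := \sqrt{\Re\{\psi(y)\}},
\]
which are well-defined and nonnegative by the first bullet of Lemma \ref{lmm0}. The target inequality then splits into two independent claims, namely $\Re\{\psi(x)+\psi(y)\} \geq (a-b)^2$ and $\Re\{\psi(x+y)\} \geq (a-b)^2$, and the overall bound follows by taking the minimum.

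The first claim is purely algebraic: $\Re\{\psi(x)+\psi(y)\} = a^2+b^2$, and $a^2+b^2-(a-b)^2 = 2ab \geq 0$ since $a,b\geq 0$. No property of $\psi$ beyond the nonnegativity of real parts is needed here.

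For the second claim I would invoke the third bullet of Lemma \ref{lmm0} with the plus sign (correcting the obvious typo in that bullet so that the left-hand side reads $\Re\{\psi(x)\}+\Re\{\psi(y)\}-\Re\{\psi(x+y)\}$). This yields
\[
\Re\{\psi(x)\}+\Re\{\psi(y)\}-\Re\{\psi(x+y)\} \leq 2\sqrt{\Re\{\psi(x)\}}\sqrt{\Re\{\psi(y)\}} = 2ab,
\]
so that
\[
\Re\{\psi(x+y)\} \geq a^2+b^2 - 2ab = (a-b)^2,
\]
which is the desired estimate. Combining both claims and taking the minimum finishes the proof.

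I do not anticipate a real obstacle here; the only mildly subtle point is recognising that the stated lemma is essentially a two-line rearrangement of the "triangle-type" estimate for continuous negative definite functions from Lemma \ref{lmm0}, once one squares out $(a-b)^2$. The main service the lemma performs is packaging this bound in a symmetric form that will be convenient when it is later applied with $\psi$ replaced by $\Re\{K(\cdot f(\cdot))\}$ inside integrals against $dx$ in the proof of Theorem \ref{thm:main}.
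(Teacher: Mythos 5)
Your proof is correct and follows essentially the same route as the paper: both arguments reduce the claim to the third bullet of Lemma \ref{lmm0} (whose typo you rightly correct) together with the identity $a^2+b^2-2ab=(a-b)^2$. The only difference is presentational --- you bound each of $\Re\{\psi(x+y)\}$ and $\Re\{\psi(x)+\psi(y)\}$ below by $(a-b)^2$ separately, whereas the paper writes the minimum as $\Re\{\psi(x)+\psi(y)\}-(\Re\{\psi(x)\}+\Re\{\psi(y)\}-\Re\{\psi(x+y)\})_+$ and bounds the positive part; the two are interchangeable.
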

\begin{proof} By Lemma \ref{lmm0}, we get
 \begin{align*}
     &\Re\{\psi(x+y)\} \wedge (\Re\{\psi(x)+\psi(y)\})\\ &=\Re\{\psi(x)+\psi(y)\}-(\Re\{\psi(x)\}+\Re\{\psi(y)\}-\Re\{\psi(x+y)\})_+\\
     &\geq \Re\{\psi(x)+\psi(y)\}- 2\sqrt{\Re\{\psi(x)\}}\sqrt{\Re\{\psi(y)\}}.
 \end{align*}
\end{proof}


\noindent The following evident bound will be used in the sequel: for any $z_1,z_2 \in \C$ 
\begin{equation}\label{eq:ez}
    \left|e^{-z_1}-e^{-z_2}\right|\leq e^{-(\Re\{z_1\}\wedge \Re\{z_2\} )}|z_1-z_2|.
\end{equation}

\begin{lemma} For all $s_1,s_2\in \R$ it holds 
\label{lmm3}
\begin{align*}
\nonumber    &\left|\E e^{i \left(s_1X(t)+s_2X(0)\right)}-\E e^{i s_1 X(t)} \E e^{i s_2X(0)}\right|\\
&\leq \exp\left[-\int_{\R^d}\left(\sqrt{\Re\{K (s_1 f(t-x))\}}- \sqrt{\Re\{K(s_2 f(-x))\}}\right)^2 dx\right]\\
&\times 2  \int_{\R^d}\sqrt{\Re\{K (s_1 f(t-x))\}}\sqrt{\Re\{K(s_2 f(-x))\}}dx.\\
\end{align*}
\end{lemma}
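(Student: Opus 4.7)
The plan is to recognize both sides as differences of exponentials of integrals and to reduce the statement, via the elementary inequality \eqref{eq:ez}, to pointwise bounds on the integrand that follow from Lemmas \ref{lmm0} and \ref{lmm2}.

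First, I would rewrite the two characteristic functions. By stationarity and the change of variables $x\mapsto t-x$ one has $\E e^{is_1 X(t)}=\exp\bigl(-\int_{\R^d}K(s_1 f(t-x))\,dx\bigr)$, while $\E e^{is_2 X(0)}=\exp\bigl(-\int_{\R^d}K(s_2 f(-x))\,dx\bigr)$ and $\E e^{i(s_1X(t)+s_2X(0))}=\vf_t(s_1,s_2)=\exp\bigl(-\int_{\R^d}K(s_1f(t-x)+s_2f(-x))\,dx\bigr)$. Thus the left-hand side equals $|e^{-z_1}-e^{-z_2}|$ with
\[
z_1=\int_{\R^d}K(s_1 f(t-x)+s_2 f(-x))\,dx,\qquad z_2=\int_{\R^d}\bigl[K(s_1 f(t-x))+K(s_2 f(-x))\bigr]\,dx.
\]

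Next I would apply \eqref{eq:ez} to get $|e^{-z_1}-e^{-z_2}|\leq e^{-(\Re z_1\wedge \Re z_2)}|z_1-z_2|$ and bound each factor separately. For $|z_1-z_2|$, I would invoke the second bullet of Lemma \ref{lmm0} with $\psi=K$ pointwise, namely
\[
\bigl|K(s_1 f(t-x))+K(s_2 f(-x))-K(s_1 f(t-x)+s_2 f(-x))\bigr|\leq 2\sqrt{\Re\{K(s_1 f(t-x))\}}\sqrt{\Re\{K(s_2 f(-x))\}},
\]
and then integrate in $x$, which yields exactly the factor $2\int_{\R^d}\sqrt{\Re\{K(s_1 f(t-x))\}}\sqrt{\Re\{K(s_2 f(-x))\}}\,dx$ appearing in the claim.

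For the exponential factor I would use Lemma \ref{lmm2} pointwise with $\psi=K$, $x=s_1 f(t-x)$, $y=s_2 f(-x)$: both $\Re\{K(s_1 f(t-x)+s_2 f(-x))\}$ and $\Re\{K(s_1 f(t-x))\}+\Re\{K(s_2 f(-x))\}$ dominate $\bigl(\sqrt{\Re\{K(s_1 f(t-x))\}}-\sqrt{\Re\{K(s_2 f(-x))\}}\bigr)^2$. Integrating in $x$, each of $\Re z_1$ and $\Re z_2$ is at least $\int_{\R^d}\bigl(\sqrt{\Re\{K(s_1 f(t-x))\}}-\sqrt{\Re\{K(s_2 f(-x))\}}\bigr)^2 dx$, hence so is their minimum. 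Substituting back into \eqref{eq:ez} produces the stated bound.

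The only subtle point is propagating the pointwise minimum in Lemma \ref{lmm2} to a bound on $\Re z_1\wedge\Re z_2$; I would handle this by noting that if both $\Re z_1$ and $\Re z_2$ are $\geq M$ then $\Re z_1\wedge\Re z_2\geq M$, which is why Lemma \ref{lmm2} is stated as a lower bound on the \emph{minimum} of the two real parts rather than on one of them. Apart from that, every step is a direct pointwise application of the negative-definite-function inequalities followed by monotonicity of the integral, so I expect no serious obstacle.
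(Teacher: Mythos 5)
Your proposal is correct and follows essentially the same route as the paper: rewrite both sides via the spectral representation, apply the elementary bound \eqref{eq:ez}, control $|z_1-z_2|$ pointwise by the second bullet of Lemma \ref{lmm0}, and control the exponent via Lemma \ref{lmm2}. Your handling of the minimum (observing that both $\Re z_1$ and $\Re z_2$ separately dominate the integral of the squared difference) is a slightly cleaner phrasing of the paper's step, which bounds the integral of the pointwise minimum, but the two arguments are equivalent.
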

\begin{proof}
Employing inequality \eqref{eq:ez}, we get
\begin{align}
\nonumber    &\left|\E e^{i \left(s_1X(t)+s_2X(0)\right)}-\E e^{i s_1 X(t)} \E e^{i s_2X(0)}\right|\\
\nonumber    &= \left| \exp\left[-\int_{\R^d}K \left(s_1f(t-x)+s_2f(-x)\right)dx\right]\right.\\
\nonumber    &\left.-\exp\left[-\int_{\R^d}\left(K (s_1 f(t-x))+K(s_2 f(-x))\right)dx\right]\right|\\
\nonumber    &\leq \exp\left[-\int_{\R^d}\Re\{K (s_1 f(t-x)+s_2 f(-x))\}dx \right.\\
\nonumber    &\left.\wedge \int_{\R^d}\left(\Re\{K (s_1 f(t-x))\}+\Re\{K(s_2 f(-x))\}\right)dx\right]\\
\label{lmm3:eq2}    &\times \int_{\R^d}\left|K (s_1 f(t-x)+s_2 f(-x))-K (s_1 f(t-x))-K(s_2 f(-x))\right|dx.
\end{align}    
Recall that $\Re\{K\}\geq 0.$ Apply   Lemma \ref{lmm2}  to   
$$  \Re\{K (s_1 f(t-x)+s_2 f(-x))\}  \wedge  \Re\{K (s_1 f(t-x))+K(s_2 f(-x)) \}$$ to get
\begin{align*}
    &\int_{\R^d}\left[\Re\{K (s_1 f(t-x)+s_2 f(-x))\}\wedge 
\Re\{K (s_1 f(t-x))+K(s_2 f(-x))\}\right]dx\\
&\geq \int_{\R^d}\left(\sqrt{\Re\{K (s_1 f(t-x))\}}- \sqrt{\Re\{K(s_2 f(-x))\}}\right)^2 dx,
\end{align*} 
Then the right--hand side of \eqref{lmm3:eq2} is bounded from above by
\begin{align*} 
    & \exp\left[-\int_{\R^d}\left(\sqrt{\Re\{K (s_1 f(t-x))\}}- \sqrt{\Re\{K(s_2 f(-x))\}}\right)^2 dx\right]\\
    &\times \int_{\R^d}\left|K (s_1 f(t-x)+s_2 f(-x))-K (s_1 f(t-x))-K(s_2 f(-x))\right|dx\\
    & \leq   \exp\left[-\int_{\R^d}\left(\sqrt{\Re\{K (s_1 f(t-x))\}}- \sqrt{\Re\{K(s_2 f(-x))\}}\right)^2 dx\right]\\
    &\times 2  \int_{\R^d}\sqrt{\Re\{K (s_1 f(t-x))\}}\sqrt{\Re\{K(s_2 f(-x))\}}dx,
\end{align*}
where the last inequality follows from Lemma  \ref{lmm0}.

\end{proof}

Finally,  Theorem \ref{thm:main}  follows immediately from the lemma below:

\begin{lemma}
\label{lmm4} Let $\tilde{\rho}<1$ such that $|A_{\tilde{\rho}}|>0.$ Then for any $t\in A_{\tilde{\rho}}$ we have 
\begin{align}
\label{lmm4:eq1}
    \nonumber & \int_\R\int_\R  \left| \Cov(\mathbbm{1}\{X(t)>u\},\mathbbm{1}\{X(0)>v\})  \right| \mu(du)\mu(dv)\\
    &\leq \frac{2}{\pi^2} \left(\int_{\R_+}\frac{\sigma_f(s)}{s} e^{-(1-\tilde{\rho})\sigma_f^2(s)}ds\right)^2\tilde{\rho_t}.
\end{align}
\end{lemma}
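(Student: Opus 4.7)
The plan is to pass from indicator covariances to a double Fourier integral involving the joint characteristic function, and then apply Lemma \ref{lmm3} inside the integrand. The starting point is the a.s.\ identity
\[ \mathbbm{1}\{Y>u\} - \pr(Y > u) = \frac{1}{\pi} \int_0^\infty \frac{\sin(s(Y-u)) - \E \sin(s(Y-u))}{s}\, ds, \]
valid for all $u$ with $\pr(Y = u) = 0$. Applying this to both $Y = X(t)$ and $Y = X(0)$ and invoking Fubini (to be justified a posteriori via the integrability furnished by the final bound) yields
\[ \Cov(\mathbbm{1}\{X(t) > u\}, \mathbbm{1}\{X(0)>v\}) = \frac{1}{\pi^2} \int_0^\infty \int_0^\infty \frac{\Cov(\sin(s_1(X(t)-u)), \sin(s_2(X(0)-v)))}{s_1 s_2}\, ds_1\, ds_2. \]

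Next I would expand $\sin a \sin b = \tfrac{1}{2}(\cos(a-b) - \cos(a+b))$ and rewrite cosines as real parts of complex exponentials; with $D(s_1,s_2) := \vf_t(s_1,s_2) - \vf_{X(t)}(s_1)\vf_{X(0)}(s_2)$ this produces
\[ \Cov(\sin(s_1(X(t)-u)), \sin(s_2(X(0)-v))) = \tfrac{1}{2}\Re[D(s_1,-s_2) e^{-is_1 u + is_2 v}] - \tfrac{1}{2}\Re[D(s_1,s_2)e^{-is_1 u - is_2 v}], \]
whose modulus is bounded by $\tfrac{1}{2}(|D(s_1,s_2)| + |D(s_1,-s_2)|)$ \emph{uniformly} in $u,v$. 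Because $\Re\{K(s)\} = \Re\{K(|s|)\}$, we have $\sigma_f(-s) = \sigma_f(s)$ and $\rho_t(s_1,-s_2) = \rho_t(s_1,s_2)$, so Lemma \ref{lmm3} bounds both $|D(s_1,s_2)|$ and $|D(s_1,-s_2)|$ by the same expression
\[ 2\sigma_f(s_1)\sigma_f(s_2)\,\rho_t(s_1,s_2)\,\exp\bigl[-\sigma_f^2(s_1) - \sigma_f^2(s_2) + 2\sigma_f(s_1)\sigma_f(s_2)\rho_t(s_1,s_2)\bigr]. \]

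To match the exponent in condition \eqref{thm1:eq1}, I would apply the elementary inequality $2\sigma_f(s_1)\sigma_f(s_2) \leq \sigma_f^2(s_1) + \sigma_f^2(s_2)$ together with $\rho_t(s_1,s_2) \leq \tilde{\rho}_t \leq \tilde{\rho}$ (valid for $t \in A_{\tilde{\rho}}$) to separate variables, obtaining
\[ |D(s_1,\pm s_2)| \leq 2\sigma_f(s_1)\sigma_f(s_2)\tilde{\rho}_t\, e^{-(1-\tilde{\rho})(\sigma_f^2(s_1) + \sigma_f^2(s_2))}. \]
Since this bound is independent of $u,v$, integration against $\mu \otimes \mu$ is trivial and the remaining $s$--integral factorises into the square announced in \eqref{lmm4:eq1}. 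The main obstacle is the rigorous handling of the Fourier representation: the improper sine integral is only conditionally convergent, so Fubini must be justified by truncating at a cut-off $T$ and passing to the limit using the integrability supplied by the final estimate; possible atoms of the law of $X(t)$ must be dealt with by approximating $\mu$ by absolutely continuous probability measures.
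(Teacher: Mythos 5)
Your argument is correct and follows essentially the same route as the paper: once the indicator covariance is expressed as a double integral of $\vf_t(s_1,s_2)-\vf_{X(0)}(s_1)\vf_{X(0)}(s_2)$ against $ds_1\,ds_2/|s_1s_2|$, you apply Lemma \ref{lmm3}, use $\rho_t(s_1,s_2)\le\tilde\rho_t\le\tilde\rho$ to separate the variables (your AM--GM step is algebraically equivalent to the paper's completion of the square), and factorise to obtain the same constant $2/\pi^2$. The only divergence is that you re-derive the covariance--characteristic-function identity from the sine-integral representation of the indicator, with the attendant care about conditional convergence and atoms, whereas the paper imports it directly as \cite[Theorem 2.1]{MaOeRappSpo21} and simply bounds $|\hat\mu|\le 1$.
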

\begin{proof}
By \cite[Theorem 2.1]{MaOeRappSpo21} we have
\begin{align*}
    &\int_\R\int_\R \Cov(\mathbbm{1}\{X(t)>u\},\mathbbm{1}\{X(0)>v\})\mu(du)\mu(dv)\\
    &= \frac{1}{4\pi^2}\int_{\R}\int_{\R} \frac{\vf_{(X(0),X(t))}(s_1,s_2)-\vf_{X(0)}(s_1)\vf_{X(0)}(s_2)}{-s_1 s_2}\hat{\mu}(s_1)\hat{\mu}(s_2)ds_1ds_2,
\end{align*}
where $\hat{\mu}(s)=\int_\R e^{isx} \mu(dx)$ is the characteristic function of a probability measure $\mu$ satisfying the inequality $|\hat{\mu}(s)| \le 1$ for all real $s$.
Applying Lemma \ref{lmm3} to the right--hand side of 
\begin{align*}
    &\int_\R\int_\R \left|  \Cov(\mathbbm{1}\{X(t)>u\},\mathbbm{1}\{X(0)>v\}) \right|    \mu(du)\mu(dv)\\
    &\le  \frac{1}{4\pi^2}\int_{\R}\int_{\R} \frac{ \left|  \vf_{(X(0),X(t))}(s_1,s_2)-\vf_{X(0)}(s_1)\vf_{X(0)}(s_2)\right|  }{|s_1 s_2| }  ds_1ds_2,
\end{align*}
 we get that  $$4 \pi^2 \int_\R\int_\R \left|  \Cov(\mathbbm{1}\{X(t)>u\},\mathbbm{1}\{X(0)>v\}) \right|    \mu(du)\mu(dv)$$ is bounded from above by
\begin{align*}
    &\int_{\R^2} \frac{\exp\left\{ -\int_{\R^d}\left(\sqrt{\Re\{K (s_1 f(t-x))\}}- \sqrt{\Re\{K(s_2 f(-x))\}}\right)^2 dx\right\}}{|s_1| |s_2|}\\
 &\times 2  \int_{\R^d}\sqrt{\Re\{K (s_1 f(t-x))\}}\sqrt{\Re\{K(s_2 f(-x))\}} \, dx ds_1 ds_2\\
 &=2\int_{\R^2}\exp\left\{-\sigma_f^2(s_1)-\sigma_f^2(s_2)+2\sigma_f(s_1)\sigma_f(s_2)\rho_t(s_1,s_2)\right\} \frac{\sigma_f(s_1)\sigma_f(s_2)\rho_t(s_1,s_2)}{|s_1| |s_2|} ds_1ds_2\\
 &\leq 2 \tilde{\rho_t}\int_{\R^2}\exp\left\{-\sigma_f^2(s_1)-\sigma_f^2(s_2)+2\sigma_f(s_1)\sigma_f(s_2)\tilde{\rho}\right\}\frac{\sigma_f(s_1)\sigma_f(s_2)}{|s_1| |s_2|} ds_1ds_2\\
 &= 2 \tilde{\rho_t}\int_{\R^2}\exp\left\{-(1-\tilde{\rho})\sigma_f^2(s_1)-(1-\tilde{\rho})\sigma_f^2(s_2)-(\sigma_f(s_1)-\sigma_f(s_2))^2\tilde{\rho}\right\} \frac{\sigma_f(s_1)\sigma_f(s_2)}{|s_1| |s_2|} ds_1ds_2\\
 &\leq 2 \tilde{\rho_t}\int_{\R^2}\exp\left\{-(1-\tilde{\rho})\sigma_f^2(s_1)-(1-\tilde{\rho})\sigma_f^2(s_2)\right\}\frac{\sigma_f(s_1)\sigma_f(s_2)}{|s_1| |s_2|} ds_1ds_2\\
 &=8 \left(\int_{\R_+}\frac{\sigma_f(s)}{s} e^{-(1-\tilde{\rho})\sigma_f^2(s)}ds\right)^2\tilde{\rho_t}.
\end{align*}
\end{proof}
The statement of  Theorem \ref{thm:main}  follows now easily by integrating the upper bound in Lemma \ref{lmm4} with respect to $t\in A_{\tilde{\rho}}$. Here, one writes
\begin{align*} 
& \int\limits_{\R^d}  \int\limits_\R\int\limits_\R \left| \Cov(\mathbbm{1}\{X(t)>u\},\mathbbm{1}\{X(0)>v\})  \right| \mu(du)\mu(dv) \, dt\\
& =\left( \int\limits_{A_{\tilde{\rho}}}   + \int\limits_{A^c_{\tilde{\rho}}}  \right)  \int\limits_\R\int\limits_\R \left| \Cov(\mathbbm{1}\{X(t)>u\},\mathbbm{1}\{X(0)>v\})  \right| \mu(du)\mu(dv) \, dt, \end{align*}
 where the  integral  over $A_{\tilde{\rho}}$ is finite by Lemma \ref{lmm4}, and 
 the  integral  over $A^c_{\tilde{\rho}}$ is finite due to the finiteness of the volume of  $A^c_{\tilde{\rho}}$, since the integrand  $$ \int\limits_\R\int\limits_\R \left| \Cov(\mathbbm{1}\{X(t)>u\},\mathbbm{1}\{X(0)>v\})  \right|   \mu(du)\mu(dv) \le 1.$$


\section*{Acknowledgement}
This research was supported by the Grant No. 39087941 of the German Research Society.

%


\begin{thebibliography}{10}

\bibitem{MR3075595}
J.~Beran, Y.~Feng, S.~Ghosh, and R.~Kulik.
\newblock {\em Long-memory processes}.
\newblock Springer, Heidelberg, 2013.
\newblock Probabilistic properties and statistical methods.

\bibitem{Kulik_Spo_21}
R.~Kulik and E.~Spodarev.
\newblock Long range dependence of heavy-tailed random functions.
\newblock {\em J. Appl. Probab.}, 58(3):569--593, 2021.

\bibitem{MaOeRappSpo21}
V.~Makogin, M.~Oesting, A.~Rapp, and E.~Spodarev.
\newblock Long range dependence for stable random processes.
\newblock {\em J. Time Series Anal.}, 42(2):161--185, 2021.

\bibitem{OestRapp23}
M.~Oesting and A.~Rapp.
\newblock Long memory of max-stable time series as phase transition: asymptotic
  behaviour of tail dependence estimators.
\newblock {\em Electron. J. Stat.}, 17(2):3316--3336, 2023.

\bibitem{OeRappSpo23}
M.~Oesting, A.~Rapp, and E.~Spodarev.
\newblock Detection of long range dependence in the time domain for
  (in)finite-variance time series.
\newblock {\em Statistics}, 57(6):1352--1379, 2023.

\bibitem{RajRos89}
B.~S. Rajput and J.~Rosi\'{n}ski.
\newblock Spectral representations of infinitely divisible processes.
\newblock {\em Probab. Theory Related Fields}, 82(3):451--487, 1989.

\bibitem{Samorodnitsky_LRD16}
G.~Samorodnitsky.
\newblock {\em Stochastic Processes and Long Range Dependence}.
\newblock Springer, 2016.

\bibitem{Samorodnitsky}
G.~Samorodnitsky and M.~Taqqu.
\newblock {\em Stable non-Gaussian random processes}.
\newblock Chapman \& Hall/CRC, 1994.

\bibitem{Sato13}
K.-I. Sato.
\newblock {\em L\'evy processes and infinitely divisible distributions},
  volume~68 of {\em Cambridge Studies in Advanced Mathematics}.
\newblock Cambridge University Press, Cambridge, revised edition, 2013.
\newblock Translated from the 1990 Japanese original.

\bibitem{Schilling12}
R.~L. Schilling, R.~Song, and Z.~Vondra{\v c}ek.
\newblock {\em Bernstein functions}, volume~37 of {\em De Gruyter Studies in
  Mathematics}.
\newblock Walter de Gruyter \& Co., Berlin, second edition, 2012.
\newblock Theory and applications.

\bibitem{SteutelHarn04}
F.~W. Steutel and K.~van Harn.
\newblock {\em Infinite divisibility of probability distributions on the real
  line}, volume 259 of {\em Monographs and Textbooks in Pure and Applied
  Mathematics}.
\newblock Marcel Dekker, Inc., New York, 2004.

\end{thebibliography}

\newcommand{\noopsort}[1]{} \newcommand{\printfirst}[2]{#1}
  \newcommand{\singleletter}[1]{#1} \newcommand{\switchargs}[2]{#2#1}

\end{document}